\definecolor{gr}{rgb}   {0.,   0.69,   0.23 }
\definecolor{bl}{rgb}   {0.,   0.5,   1. }
\definecolor{mg}{rgb}   {0.85,  0.,    0.85}
\definecolor{yl}{rgb}   {0.8,  0.7,   0.}
\definecolor{or}{rgb}  {0.7,0.2,0.2}
\tikzset{
	dot/.style={circle,fill=black,draw=black,inner sep=0pt,minimum size=0.5mm},
	>=stealth,
	}
\tikzset{
	dot2/.style={circle,fill=black,draw=black,inner sep=0pt,minimum size=0.2mm},
	>=stealth,
	}
\tikzset{
	ddot/.style={circle,fill=black,draw=black,inner sep=0pt,minimum size=0.8mm},
	>=stealth,
	}
\tikzset{decision/.style={ 
        draw,
        diamond,
        aspect=1.5
    }}
\tikzset{dia2/.style
={diamond,fill=white,draw=black,inner sep=0pt,minimum size=1mm},
	>=stealth,
	}
\tikzset{dia/.style
={star,fill=black,draw=black,inner sep=0pt,minimum size=1mm},
	>=stealth,
	}
\tikzset{dia/.style
={diamond,fill=black,draw=black,inner sep=0pt,minimum size=1.3mm},
	>=stealth,
	}
\def\DeclareSymbol#1#2#3{\xsavebox{#1}{\tikz[baseline=#2,scale=0.15]{#3}}}
\def\<#1>{\xusebox{#1}}
\newsavebox{\peA}
\newsavebox{\pneA}
\newsavebox{\plA}
\newsavebox{\pgA}
\newsavebox{\pleA}
\newsavebox{\pgeA}
\newsavebox{\pezA}
\savebox{\peA}{\tikz \draw (0,0) node[shape=circle,draw,inner sep=0pt,minimum size=8.5pt] {\scriptsize  $=$};}
\savebox{\pneA}{\tikz \draw (0,0) node[shape=circle,draw,inner sep=0pt,minimum size=8.5pt] {\footnotesize $\neq$};}
\savebox{\plA}{\tikz \draw (0,0) node[shape=circle,draw,inner sep=0pt,minimum size=8.5pt] {\scriptsize $<$};}
\savebox{\pgA}{\tikz \draw (0,0) node[shape=circle,draw,inner sep=0pt,minimum size=8.5pt] {\scriptsize $>$};}
\savebox{\pleA}{\tikz \draw (0,0) node[shape=circle,draw,inner sep=0pt,minimum size=8.5pt] {\scriptsize $\leqslant$};}
\savebox{\pgeA}{\tikz \draw (0,0) node[shape=circle,draw,inner sep=0pt,minimum size=8.5pt] {\scriptsize $\geqslant$};}
\savebox{\pezA}{\tikz \draw (0,0) node[shape=circle,draw,
fill=white, 
inner sep=0pt,minimum size=8.5pt]{} ;}
\def \peB{\mathchoice
{\scalebox{.7}{{\usebox{\peA}}}}
{\scalebox{.7}{{\usebox{\peA}}}}
{\scalebox{.7}{{\usebox{\peA}}}}
{}
}
\def \pezB{\mathchoice
{\scalebox{.7}{{\usebox{\pezA}}}}
{\scalebox{.7}{{\usebox{\pezA}}}}
{\scalebox{.7}{{\usebox{\pezA}}}}
{}
}
\newcommand{\pe}{\mathbin{{\peB}}}
\newcommand{\pez}{\mathbin{{\pezB}}}
\tikzset{>=stealth',
         cvertex/.style={circle,draw=black,inner sep=1pt,outer sep=3pt},
         vertex/.style={circle,fill=black,inner sep=1pt,outer sep=3pt},
         star/.style={circle,fill=yellow,inner sep=0.75pt,outer sep=0.75pt},
         tvertex/.style={inner sep=1pt,font=\scriptsize},
         gap/.style={inner sep=0.5pt,fill=white}}
\tikzstyle{mybox} = [draw=black, fill=blue!10, very thick,
\tikzstyle{boxtitle} =[fill=blue!50, text=white,rectangle,rounded corners]
\tikzstyle{decision} = [diamond, draw, fill=blue!20,
\tikzstyle{block} = [rectangle, draw, fill=blue!20,
\tikzstyle{line} = [draw, very thick, color=black!50, -latex']
\tikzstyle{cloud} = [draw, ellipse,fill=red!40, 
\tikzstyle{cloud2} = [draw, ellipse,fill=red!30, text=white,text width=10em, node distance=2.5cm, text centered, minimum height=4em]
\tikzstyle{cloud3} = [draw, ellipse, fill=cyan!30, 
\tikzstyle{cloud4} = [draw, ellipse,fill=orange!70, node distance=2.5cm,
\tikzstyle{cloud5} = [draw, ellipse,fill=red!20, node distance=2.5cm,
\tikzstyle{cloud6} = [draw, ellipse,fill=red!20, node distance=2.5cm,
\tikzset{
    position/.style args={#1:#2 from #3}{
        at=(#3.#1), anchor=#1+180, shift=(#1:#2)
    }
}
\newtheorem{theorem}{Theorem} [section]
\newtheorem{lemma}[theorem]{Lemma}
\newtheorem{proposition}{Proposition}
\newtheorem{remark}[theorem]{Remark}
\newtheorem{oldtheorem}{Theorem}
\DeclareMathOperator*{\supp}{supp}
\newcommand{\1}{\hspace{0.2mm}\text{I}\hspace{0.2mm}}
\newcommand{\II}{\text{I \hspace{-2.8mm} I} }
\newcommand{\III}{\text{I \hspace{-2.9mm} I \hspace{-2.9mm} I}}
\newcommand{\noi}{\noindent}
\newcommand{\Z}{\mathbb{Z}}
\newcommand{\R}{\mathbb{R}}
\newcommand{\T}{\mathbb{T}}
\let\P= \undefined
\newcommand{\P}{\mathbf{P}}
\newcommand{\K}{\mathcal{K}}
\newcommand{\F}{\mathcal{F}}
\newcommand{\dl}{\delta}
\newcommand{\nb}{\nabla}
\newcommand{\Dl}{\Delta}
\newcommand{\g}{\gamma}
\newcommand{\s}{\sigma}
\newcommand{\ft}{\widehat}
\newcommand{\wt}{\widetilde}
\renewcommand{\l}{\ell}
\newcommand{\les}{\lesssim}
\newcommand{\ges}{\gtrsim}
\newcommand{\jb}[1]
{\langle #1 \rangle}
\newcommand{\ind}{\mathbf 1}
\renewcommand{\S}{\mathcal{S}}
\newcommand{\Sb}{\mathbf{S}}
\newcommand{\M}{\mathcal{M}}
\newcommand{\N}{\mathbb{N}}
\newtheorem*{ackno}{Acknowledgements}
\numberwithin{equation}{section}
\numberwithin{theorem}{section}
\newcommand{\Q}{\mathbf{Q}}
\newcommand{\GG}{\mathcal{G}}
\newcommand{\Id}{\mathrm{Id}}
\DeclareRobustCommand\widecheck[1]{{\mathpalette\@widecheck{#1}}}
\def\@widecheck#1#2{%
   \setbox\z@\hbox{\m@th$#1#2$}%
   \setbox\tw@\hbox{\m@th$#1%
      \widehat{%
         \vrule\@width\z@\@height\ht\z@
         \vrule\@height\z@\@width\wd\z@}$}%
   \dp\tw@-\ht\z@
   \@tempdima\ht\z@ \advance\@tempdima2\ht\tw@ \divide\@tempdima\thr@@
   \setbox\tw@\hbox{%
      \raise\@tempdima\hbox{\scalebox{1}[-1]{\lower\@tempdima\box\tw@}}}%
   {\ooalign{\box\tw@ \cr \box\z@}}}
\begin{document}
\baselineskip = 14pt

\title[Fractional Leibniz rule on the torus]
{Fractional Leibniz rule on the torus}

\author[\'A. B\'enyi,  T. Oh, and T.~Zhao]{\'Arp\'ad B\'enyi, Tadahiro Oh, and Tengfei Zhao}

\address{\'Arp\'ad B\'enyi, Department of Mathematics,
516 High St, Western Washington University, Bellingham, WA 98225,
USA.} \email{benyia@wwu.edu}

\address{
Tadahiro Oh, School of Mathematics\\
The University of Edinburgh\\
and The Maxwell Institute for the Mathematical Sciences\\
James Clerk Maxwell Building\\
The King's Buildings\\
Peter Guthrie Tait Road\\
Edinburgh\\ 
EH9 3FD\\
 United Kingdom\\ and 
School of Mathematics and Statistics, Beijing Institute of Technology, Beijing 100081,
China}

\email{hiro.oh@ed.ac.uk}

\address{
Tengfei Zhao\\
School of Mathematics and Physics\\
University of Science and Technology Beijing\\
 Beijing 100083, China}

\email{zhao$\_$tengfei@ustb.edu.cn}

\subjclass[2020]{42B15, 42B25, 46E35}

\keywords{fractional Leibniz rule; 
Poisson summation formula}

\begin{abstract}
We discuss the fractional Leibniz rule for periodic functions on the 
 $d$-dimensional torus,
 including the endpoint cases.
 As an application, 
we present a  product estimate, involving distributions
of negative regularities.

\end{abstract}


\maketitle

\tableofcontents

\section{Introduction}

In their seminal work \cite{KatoPonce}
 on the well-posedness theory of the Navier-Stokes and Euler equations, Kato and Ponce proved  the following commutator estimate:  
%
\begin{align}
\|J^s(fg)- f J^s g\|_{L^p(\R^d)}
& \les  \|J^s f\|_{L^{p}(\R^d)} 
\|  g\|_{L^{\infty}(\R^d)}
+ 
\| \nb  f\|_{L^{\infty}(\R^d)} \|J^{s-1}g\|_{L^{p}(\R^d)}
\label{KP1} 
\end{align}

\noi
for $s > 0$, $1 < p < \infty$,  and $f, g\in \S(\R^d)$.
Here, $J^s = J^s_{\R^d} =(1-\Dl)^\frac s2$ denotes 
the Fourier multiplier operator on $\R^d$ 
with multiplier $ (1+ 4\pi^2|\xi|^2)^\frac{s}{2}$.\footnote{In the following, 
when there is no confusion, we simply write $J^s$
to denote the operator $(1-\Dl)^\frac s2$ on either $\R^d$
or $\T^d$, depending on the context.
A similar comment applies to $D^s = (-\Dl)^\frac s2$.}
One variant of the Kato-Ponce inequality~\eqref{KP1} 
takes the form of 
 the 
 fractional Leibniz rule on the Euclidean space $\R^d$, 
 which we now recall.

\begin{oldtheorem}\label{THM:A}
Let $s > 0$,
$1\le p_j,q_j \le \infty$, $j=1,2$, and $\frac 12 \le r \le \infty$ such that $\frac{1}{r}=\frac{1}{p_j}+\frac{1}{q_j}$.
Suppose that one of the following conditions holds\textup{:}

\begin{itemize}
\item[(i)]
\textup{(non-endpoint case).}
Let    $1<p_j,q_j \le \infty$, $j=1,2$, and $\frac 12 < r < \infty$ such that 
$s > \frac  dr - d$ or $s \in 2\N$.

\medskip
\item[(ii)]
\textup{($L^\infty$-endpoint case).}
Let $p_j = q_j =  r = \infty$,  $j = 1, 2$.

\medskip
\item[(iii)]
\textup{($L^1$-endpoint case).}
Let  $p_1 = p_2 = 1$, $1 \le q_1 = q_2 \le \infty$, and $\frac 12 \le r \le 1$
such that $\frac 1r = 1 + \frac 1{q_j}$, $j = 1, 2$.

\end{itemize}

\medskip

\noi
Then, the following estimates hold\textup{:}
\begin{align}
\|D^s(fg)\|_{L^r(\R^d)}
& \les\| D^s f\|_{L^{p_1}(\R^d)} \|g\|_{L^{q_1}(\R^d)}+ \|f\|_{L^{p_2}(\R^d)} 
\|  D^s g\|_{L^{q_2}(\R^d)},\label{Leib1} \\
\|J^s(fg)\|_{L^r(\R^d)}
& \les\| J^s f\|_{L^{p_1}(\R^d)} \|g\|_{L^{q_1}(\R^d)}+ \|f\|_{L^{p_2}(\R^d)} 
\|  J^s g\|_{L^{q_2}(\R^d)}
\label{Leib2} 
\end{align}

\noi
for any $f, g \in \S(\R^d)$, 
where  $D^s=
D^s_{\R^d} = 
(-\Dl)^\frac s2$ denotes 
the Fourier multiplier operator on $\R^d$ with multiplier $(2\pi|\xi|)^s$.

\end{oldtheorem}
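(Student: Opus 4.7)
The standard strategy is Bony's paraproduct decomposition. Using an inhomogeneous Littlewood--Paley partition $\{\Delta_k\}_{k \ge -1}$ on $\R^d$ and the low-frequency cutoffs $S_k = \sum_{j<k}\Delta_j$, I would write
\begin{align*}
fg \; = \; \Pi_1(f,g) + \Pi_2(f,g) + \Pi_3(f,g),
\end{align*}
where $\Pi_1(f,g) = \sum_k S_{k-3} f \cdot \Delta_k g$, $\Pi_2(f,g) = \Pi_1(g,f)$, and $\Pi_3(f,g) = \sum_{|j-k|\le 2} \Delta_j f \cdot \Delta_k g$. The summands of $\Pi_1$ and $\Pi_2$ have Fourier support in annuli of size $\sim 2^k$, so $D^s$ acts essentially as multiplication by $2^{ks}$ on each piece, whereas the summands of $\Pi_3$ have Fourier support only in the balls $\{|\xi| \lesssim 2^k\}$.

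For case (i), I would bound the low-high and high-low paraproducts using the Littlewood--Paley square function characterization of $L^r$, H\"older's inequality, and the Fefferman--Stein vector-valued maximal inequality, giving
\begin{align*}
\|D^s \Pi_1(f,g)\|_{L^r(\R^d)} \; \lesssim \; \|f\|_{L^{p_2}(\R^d)} \|D^s g\|_{L^{q_2}(\R^d)},
\end{align*}
with the analogous bound for $\Pi_2$ by symmetry. The resonant part $\Pi_3$ is the delicate piece: because its summands are ball-supported rather than annulus-supported in frequency, the clean scaling used above fails, and one must exploit the hypothesis $s > d/r - d$ through Bernstein's inequality to sum the low-frequency overlaps across scales. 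When $s = 2m \in 2\N$, I would bypass Littlewood--Paley analysis entirely by expanding $(-\Delta)^m(fg)$ via the classical Leibniz formula into a finite sum of products $\partial^\alpha f \cdot \partial^\beta g$ with $|\alpha|+|\beta| = 2m$; this handles all admissible exponents in (i) without the resonant restriction. The $J^s$ inequality reduces to the $D^s$ one after noting that $J^s - D^s$ is of order $\le s-2$ and bounded on $L^p$ for $1 < p < \infty$.

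For the endpoint cases, (ii) can be obtained by a parallel paraproduct analysis interpreted at the level of the Besov embedding $L^\infty \hookrightarrow B^0_{\infty,\infty}$, while (iii) requires replacing $L^1$ by the Hardy space $\mathcal{H}^1(\R^d)$ (and its $\mathcal{H}^r$ extensions for $\frac12 \le r \le 1$) and using atomic/molecular bounds for the paraproducts, since the Riesz transforms are no longer $L^1$-bounded. The main obstacle throughout is the resonant term $\Pi_3$: in (i) it is what forces the condition $s > d/r - d$ (with the algebraic alternative $s \in 2\N$), and in the $L^1$-endpoint regime of (iii) its treatment requires Hardy space machinery in place of standard Calder\'on--Zygmund $L^p$ theory, making it the most technical portion of the argument.
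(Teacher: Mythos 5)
Theorem~A is not proved in this paper; it is a compilation of known results, with case~(i) attributed to \cite{MS, GO}, case~(ii) to \cite{BL}, and case~(iii) to \cite{OW}, and it is then used as a black box to deduce the torus version (Proposition~\ref{PROP:1}) via the Poisson summation formula. So there is no in-paper proof to compare against; I will assess your sketch on its own merits.

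For case~(i) your outline (paraproduct decomposition, square function characterization, vector-valued maximal inequalities, Bernstein for the resonant piece with $s > d/r - d$, classical Leibniz when $s\in 2\N$) is broadly the route in \cite{MS, GO}, though it glosses over the required modifications when one of $p_j, q_j$ equals $\infty$ (allowed since $r<\infty$ forces the other exponent finite) and the quasi-Banach regime $\tfrac12 < r < 1$. The genuine gap is in case~(ii). The reduction $L^\infty \hookrightarrow B^0_{\infty,\infty}$ does \emph{not} close the argument: feeding the paraproducts $\Pi_1, \Pi_2$ through a naive Besov bound produces
\[
\|D^s\Pi_1(f,g)\|_{L^\infty} \,\les\, \|f\|_{L^\infty}\sum_{k}2^{ks}\|\Delta_k g\|_{L^\infty}
\,\sim\, \|f\|_{L^\infty}\,\|g\|_{\dot B^s_{\infty,1}},
\]
whereas the hypothesis only controls $\|D^s g\|_{L^\infty}\sim \|g\|_{\dot B^s_{\infty,\infty}}$, and the embedding $\dot B^s_{\infty,\infty}\hookrightarrow \dot B^s_{\infty,1}$ is false---the discrepancy is exactly a logarithmic divergence in $k$, which is what the partial result \cite{GMN} could not remove. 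The actual resolution in \cite{BL} is the ``low frequency to high frequency switch'': replace $S_{k-3}f = f - \sum_{j\ge k-3}\Delta_j f$ and reorganize the sum so that the would-be divergent diagonal piece telescopes. Your sketch omits this idea entirely; without it, case~(ii) fails. (Notice that the present paper itself invokes precisely this device, in the treatment of the term $A_{12}$ in the proof of Proposition~\ref{PROP:1}.) For case~(iii), your suggestion to pass to Hardy spaces $\mathcal{H}^r$ is a reasonable instinct, but it changes the statement (the target is $L^r$, not $\mathcal{H}^r$) and it is not how \cite{OW} proceeds---they instead establish new linear and bilinear Fourier multiplier estimates directly on the quasi-Banach $L^r$ scale. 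As written, this part of your sketch is too vague to constitute a proof.
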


See 
\cite[Proposition 3.3]{CW}
and \cite[Theorems~A.8 and~A.12]{KPV93}
for the classical non-endpoint results, where 
there are further restrictions such as  $0 < s < 1$
and $1<p_j,q_j, r \lneq \infty$;
see also \cite{GK, Gra0}.
In the non-endpoint case~(i), 
the restriction
$s > \frac  dr - d$ plays no role when $r \ge 1$,
since we assume $s > 0$.
The non-endpoint case~(i) was established
independently by 
Muscalu and Schlag \cite[(2.1)]{MS} (for the one-dimensional case
which can be easily extended to higher dimensions)
and by Grafakos and S.\,Oh \cite{GO}.
We point out that the condition on $s$ in 
the non-endpoint case~(i)
is  sharp
(with an extra assumption
 $p_j,q_j < \infty$, $j=1,2$); see \cite[Theorem~2]{GO}.
 See also \cite[p.\,30]{MS}.
Note that, 
in the non-endpoint case (i), 
the proofs of Theorem~\ref{THM:A} in \cite{MS, GO}
involve (a)~Bony's paraproduct decomposition~\cite{Bony}
and the Coifman-Meyer theorem \cite{CM1, CM2}
or (b)~square function estimates and vector-valued maximal inequalities, 
  and hence they do not extend to the endpoint cases (ii) and~(iii).
In \cite{BL}, 
by introducing a ``low frequency to high frequency switch''
in carrying out summations, Bourgain and Li established
Theorem~\ref{THM:A}
in the $L^\infty$-endpoint case~(ii);
see also a partial result \cite{GMN}.
In the $L^1$-endpoint case~(iii),
when $q_1 = q_2 < \infty$, 
we have $r < 1$, which 
brings  a new difficulty
 since $L^r(\R^d)$
 is only a quasi-Banach space, where Young's inequality 
fails.
By establishing new linear and bilinear  multiplier estimates
 on quasi-Banach spaces, 
S.\,Oh and Wu \cite{OW} overcame this difficulty and
established 
Theorem \ref{THM:A}
in the $L^1$-endpoint case (iii).

The fractional Leibniz rule on $\R^d$ (Theorem \ref{THM:A})
has played a fundamental role in the study 
of nonlinear PDEs on $\R^d$ (or on $\R^d_x \times \R_t$), especially in low regularity setting;
see, for example,~\cite{KPV93}.
Similarly, in studying nonlinear PDEs
in  the periodic setting, i.e.~posed
on the $d$-dimensional torus $\T^d = (\R/\Z)^d$
(or on $\T^d_x \times \R_t$), 
the fractional Leibniz rule on $\T^d$
has played an essential role;
see, for example, 
\cite{IK2, TV, KP, GKO, CLOP}.
Unfortunately, there seems to be no standard 
reference for 
the fractional Leibniz rule on $\T^d$.
In some works, 
special cases  of the fractional Leibniz rule on $\T^d$
were proved in an ad hoc manner
(see, for example,  
\cite[Lemma~9.A.2]{IK2} and \cite[Lemma~3.4]{GKO}), 
whereas some other works
simply invoked 
the fractional Leibniz rule on~$\T^d$
without any proper proof or reference.

Our main goal in this note is 
to provide a simple proof of  the  fractional Leibniz rule on the $d$-dimensional torus $\T^d$, 
corresponding to that on $\R^d$ stated in Theorem~\ref{THM:A}.
In \cite{BO}, 
the first two authors presented an elementary proof of Sobolev's inequality on $\T^d$,
which was ``part of the mathematical analysis folklore'', 
to provide a standard point of reference, especially
to young researchers.
We hope that the following proposition 
serves a similar purpose.

\begin{proposition}\label{PROP:1}
Let $s > 0$,
$1\le p_j,q_j \le \infty$, $j=1,2$, and $\frac 12 \le r \le \infty$ such that $\frac{1}{r}=\frac{1}{p_j}+\frac{1}{q_j}$.
Suppose that one of the conditions (i), (ii), and (iii) in 
Theorem \ref{THM:A} holds.
Then, we have 
\begin{align}
\|D^s (fg)\|_{L^r(\T^d)}
& \les\| D^s f\|_{L^{p_1}(\T^d)} \|g\|_{L^{q_1}(\T^d)}+ \|f\|_{L^{p_2}(\T^d)} 
\|  D^s g\|_{L^{q_2}(\T^d)},\label{Leib3} \\
\|J^s(fg)\|_{L^r(\T^d)}
& \les\| J^s f\|_{L^{p_1}(\T^d)} \|g\|_{L^{q_1}(\T^d)}+ \|f\|_{L^{p_2}(\T^d)} 
\|  J^s g\|_{L^{q_2}(\T^d)}
\label{Leib4} 
\end{align}

\noi
for any $f, g \in C^\infty(\T^d)$, 
where  $D^s= D^s_{\T^d}=(-\Dl)^\frac s2$
and  $J^s = J^s_{\T^d}= (1-\Dl)^\frac s2$ denote
the Fourier multiplier operators on $\T^d$ with multipliers $(2\pi |n|)^s$
and $ (1+ 4\pi^2|n|^2)^\frac{s}{2}$, respectively;
namely, 
\begin{align*}
D^s f(x) & = \sum_{n \in \Z^d} (2\pi |n|)^s\ft f(n) e^{2\pi i n \cdot x}, \\
J^s f(x) & =  \sum_{n \in \Z^d} (1+ 4\pi^2|n|^2)^\frac{s}{2}\ft f(n) e^{2\pi i n \cdot x}.
\end{align*}

\end{proposition}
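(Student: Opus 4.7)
My plan is to transfer Theorem~\ref{THM:A} from $\R^d$ to $\T^d$ via a Schwartz lift combined with the Poisson summation formula. Fix $\phi \in \S(\R^d)$ with $\ft\phi \in C_c^\infty\bigl((-\tfrac14, \tfrac14)^d\bigr)$ and $\|\phi\|_{L^2(\R^d)} = 1$. Given $f, g \in C^\infty(\T^d)$, viewed as $\Z^d$-periodic on $\R^d$, set $F = \phi f$ and $G = \phi g$; both lie in $\S(\R^d)$. Since $\widehat{\phi^2} = \ft\phi \ast \ft\phi$ is supported in $(-\tfrac12, \tfrac12)^d$ with $\widehat{\phi^2}(0) = 1$, a direct computation of $\widehat{FG} = \ft F \ast \ft G$ restricted to the integer lattice gives $\widehat{FG}(n) = \widehat{fg}_{\T^d}(n)$ for every $n \in \Z^d$. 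The Poisson summation formula then produces the key identity
\begin{align*}
D^s_{\T^d}(fg)(x) = \sum_{m \in \Z^d} D^s_{\R^d}(FG)(x+m),
\end{align*}
which lifts the bilinear $D^s$-operator on $\T^d$ to its Euclidean counterpart acting on a Schwartz function.

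Next I would bound $\|D^s_{\T^d}(fg)\|_{L^r(\T^d)}$ by $\|D^s_{\R^d}(FG)\|_{L^r(\R^d)}$ up to harmless corrections, and then apply Theorem~\ref{THM:A}. For $r \le 1$ the subadditivity $|a+b|^r \le |a|^r + |b|^r$ gives $\|D^s_{\T^d}(fg)\|_{L^r(\T^d)}^r \le \|D^s_{\R^d}(FG)\|_{L^r(\R^d)}^r$ directly. For $r > 1$ Minkowski's inequality reduces matters to $\sum_m \|D^s_{\R^d}(FG)\|_{L^r(m+[0,1]^d)}$; the tail is controlled via the decay $|D^s_{\R^d}(FG)(x)| \les (1+|x|)^{-d-s}|\widehat{fg}(0)|$ at infinity, a consequence of the singularity of the symbol $(2\pi|\xi|)^s$ at the origin together with $\int_{\R^d} FG = \widehat{fg}_{\T^d}(0)$. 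Hence $\|D^s_{\T^d}(fg)\|_{L^r(\T^d)} \les \|D^s_{\R^d}(FG)\|_{L^r(\R^d)} + |\widehat{fg}(0)|$, and Theorem~\ref{THM:A} applied to $F,G \in \S(\R^d)$ bounds the first summand by the $\R^d$-Leibniz expression. H\"older on $\T^d$ yields $|\widehat{fg}(0)| \le \|f\|_{L^{p_1}(\T^d)}\|g\|_{L^{p_1'}(\T^d)} \le \|f\|_{L^{p_1}(\T^d)}\|g\|_{L^{q_1}(\T^d)}$, where $r\ge 1$ ensures $p_1'\le q_1$ so that the last step is H\"older on the torus.

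It remains to transfer the $\R^d$-norms on the right back to $\T^d$-norms. The bound $\|F\|_{L^p(\R^d)} \les \|f\|_{L^p(\T^d)}$ is immediate from the Schwartz decay of $\phi$ and the periodicity of $f$. For $\|D^s_{\R^d}F\|_{L^p(\R^d)}$ I would use the commutator decomposition
\begin{align*}
D^s_{\R^d}(\phi f) = \phi\cdot\widetilde{D^s_{\T^d}f} + E, \qquad E(x) = \sum_{n\in\Z^d}\widehat f(n)\,e^{2\pi in\cdot x}\,\Psi_n(x),
\end{align*}
with $\Psi_n(x)=\int[(2\pi|\eta+n|)^s-(2\pi|n|)^s]\,\ft\phi(\eta)\,e^{2\pi i\eta\cdot x}\,d\eta$; Taylor-expanding the radial symbol around $\eta=0$ for $|n|\ge 1$ recasts $E$ as a sum of operators of order $\le s-1$ acting on $f$ multiplied by Schwartz functions of $x$, producing $\|D^s_{\R^d}F\|_{L^p(\R^d)} \les \|D^s_{\T^d}f\|_{L^p(\T^d)} + \|f\|_{L^p(\T^d)}$. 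A trivial reduction to the mean-zero case $\widehat f(0)=\widehat g(0)=0$ (with the constant parts handled separately) lets me absorb all remaining lower-order terms via the Poincar\'e-type bound $\|h\|_{L^p(\T^d)}\les\|D^s h\|_{L^p(\T^d)}$ valid for mean-zero $h$, which holds since the periodic Riesz potential $D^{-s}$ is convolution with an integrable kernel on $\T^d$ for every $s>0$. The estimate \eqref{Leib4} for $J^s$ follows by an identical argument with $J^s$ in place of $D^s$; the smoothness of $(1+4\pi^2|\xi|^2)^{s/2}$ at the origin yields rapid decay of $J^s_{\R^d}(FG)$ and removes the need for the mean-zero reduction. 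The hardest step will be the Taylor-expansion analysis of $E$ and the $L^p$-boundedness of the lower-order operators appearing there at the endpoints $p\in\{1,\infty\}$ of cases~(ii) and~(iii); there I would bypass Riesz transforms via a direct Schur-type estimate on the integral kernel of $[D^s_{\R^d},\phi]$.
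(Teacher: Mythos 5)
Your overall strategy matches the paper's at the structural level (the Poisson summation formula is used to reduce the periodic bilinear estimate to the Euclidean one from Theorem~\ref{THM:A}, and Poincar\'e-type bounds for mean-zero functions absorb lower-order terms), but your implementation of the two hard technical steps is genuinely different and is worth contrasting. First, in place of the paper's compactly supported periodic partition $\varphi_1+\varphi_2=1$ with the auxiliary cutoffs $\psi_j$ (arranged so that $FG_j=(\psi_jF)G_j$ has both factors compactly supported in a single period cell), you use a single Paley--Wiener cutoff $\phi$ with $\widehat\phi\in C_c^\infty$, so that $\widehat{\phi^2}(n)=\delta_{n,0}$ makes the Poisson identity clean. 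This is tidy, though in high dimensions you must shrink the support of $\widehat\phi$ to a ball of radius $<1$ so that $(2\pi|\eta+n|)^s$ is smooth on $\supp\widehat\phi$ for all $|n|\ge1$; with a cube of side $\tfrac12$ the lattice point $n$ can lie inside the support for $d\ge 16$. Second, and more substantially, the paper's workhorse is the estimate \eqref{B1f}, which it proves with a paraproduct decomposition, the Mourrat--Weber-type commutator bound of Lemma~\ref{LEM:MW}, and the Bourgain--Li ``low-to-high'' switch precisely to reach $p\in\{1,\infty\}$; you instead prove the analogous transfer $\|D^s_{\R^d}(\phi f)\|_{L^p(\R^d)}\les\|D^s_{\T^d}f\|_{L^p(\T^d)}$ by writing $D^s_{\R^d}(\phi f)=\phi\,\widetilde{D^s_{\T^d}f}+E$ and controlling $E$ by Taylor-expanding the symbol difference. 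This is a legitimate alternative and avoids any Littlewood--Paley theory, which is aesthetically appealing.

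Two points deserve care before the argument can be regarded as complete. (1) Your claim that a \emph{direct} Schur-type bound on the kernel of $[D^s_{\R^d},\phi]$ handles the endpoints cannot work as stated once $s\ge1$: that commutator is a pseudodifferential operator of order $s-1\ge0$, whose kernel is not absolutely integrable, and it does not map $L^p\to L^p$. The Schur argument is only available for the \emph{remainder} after you peel off a finite Taylor expansion of order $J\ge s$; the peeled terms $c_\alpha\,\partial^\alpha\phi(x)\cdot T_\alpha f(x)$ (with $T_\alpha$ of order $s-|\alpha|$) must then be handled separately, and at $p\in\{1,\infty\}$ you should verify $\|T_\alpha f\|_{L^p(\T^d)}\les\|D^s_{\T^d}f\|_{L^p(\T^d)}$ by showing that $T_\alpha D^{-s}_{\T^d}$ has an $L^1(\T^d)$ kernel on mean-zero functions (true since it is a multiplier of strictly negative order $-|\alpha|\le-1$), rather than by a Mihlin-type theorem, which is unavailable at those endpoints. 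You gesture at all of this, but the wording suggests you may be underestimating the gap between $s<1$ and $s\ge1$. (2) The decay bound for the Poisson tail when $r\ge1$ is not proportional to $|\widehat{fg}(0)|$ alone; the correct statement, and the one you actually need, is $|D^s_{\R^d}(FG)(x)|\les|x|^{-d-s}\|FG\|_{L^1(\R^d)}$ for $|x|\ges1$ (from Lemma~\ref{LEM:decay} together with the Schwartz decay of $FG$), and then $\|FG\|_{L^1(\R^d)}\les\|fg\|_{L^1(\T^d)}\le\|f\|_{L^{p_1}(\T^d)}\|g\|_{L^{q_1}(\T^d)}$ by H\"older, which is exactly the chain the paper uses in \eqref{C3}. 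With these two repairs the proposal is a correct, instructively different proof.
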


See
\cite[Lemma~9.A.2]{IK2} and \cite[Lemma~3.4]{GKO}, 
where 
special cases of Proposition~\ref{PROP:1} on~$\T^d$
were proven.
In \cite[Lemma~9.A.2]{IK2}, 
Ionescu and Kenig proved  \eqref{Leib4}
with $0 < s < 1$ and $r = p_1 = 2$
(and $q_1 = \infty$)
by transferring the corresponding estimate on $\R^d$
(\cite[Theorems~A.12]{KPV93})
via the Poisson summation formula.
In \cite[Lemma~3.4]{GKO}, 
the bound \eqref{Leib4}
with $0 < s < 1$ 
and 
$1 < p_j,q_j, r <  \infty$
was shown as a corollary to 
\cite[Proposition~3.3]{CW}
and the bilinear transference principle \cite{FS}
(applied to the application of  the Coifman-Meyer theorem \cite{CM1, CM2}).
See also 
\cite[Theorem 3]{Gatto} 
for some non-endpoint fractional Leibniz rule on a space of homogeneous type
(which in particular includes the $d$-dimensional torus).

In \cite{MS}, the fractional Leibniz rule \eqref{Leib1} on $\R^d$
in the non-endpoint case (i) was shown 
via 
 Bony's paraproduct decomposition~\cite{Bony}
and the Coifman-Meyer theorem \cite{CM1, CM2};
see also \cite[Theorem 2.15]{MS}.
As such, 
the corresponding result on $\T^d$ 
in the non-endpoint case follows from 
the  paraproduct decomposition
and the bilinear transference principle~\cite{FS}
applied to each of the paraproducts
(estimated by the Coifman-Meyer theorem).
Note that, instead of invoking the bilinear transference principle, 
it is possible to directly prove
the Coifman-Meyer theorem on $\T^d$;
see
\cite[Problem 3.4]{MS} for such a result in the (more complicated)
bi-parameter setting.\footnote{Interested readers may want to directly 
prove \eqref{Leib3} and \eqref{Leib4} on $\T^d$, at least in the non-endpoint case (i), 
without using a transference result,
by adapting \cite[Chapter 2]{MS} to the periodic setting.}
We point out, however, that 
such an approach via a transference principle 
or the Coifman-Meyer theorem on $\T^d$
does not work in the endpoint cases
(see, for example, 
\cite[Theorem 4.3.7]{Gra1} in the linear case, 
where a restriction $1 < p < \infty$ appears).
In Section \ref{SEC:2}, 
we instead present a proof of Proposition \ref{PROP:1}
as a direct corollary to 
the fractional Leibniz rule~\eqref{Leib1} and~\eqref{Leib2} on $\R^d$, 
using the Poisson summation formula.

\begin{remark}\rm
 A generalization of \eqref{Leib2} in the non-endpoint case to bilinear pseudodifferential operators with symbols in 
 the so-called exotic H\"ormander classes (which encompass the symbols of Coifman-Meyer type) was obtained in \cite[Theorem 2]{BT}; see also \cite{BN,  NT}
  for some weighted versions and applications.  
 Note that  
\cite[Theorem 2]{BT} has a natural counterpart on $\T^d$. 
 Interested readers may consider adapting some of the methods suggested in this note to prove such a result.

\end{remark}

As an application of the fractional Leibniz rule (Theorem \ref{THM:A}
and Proposition \ref{PROP:1}), 
we present the following product estimate, involving distributions
of negative regularities.

\begin{proposition}\label{PROP:2}
Let $\M = \R^d$ or $\T^d$.
Let  $s > 0$ and  $1\le p,q,r\le \infty$ such that 
\begin{align}
 \frac{1}{p}+\frac{1}{q}\le \frac{1}{r}+ \frac sd
\qquad \text{and}\qquad
q, r' \ge p', 
\label{GKO0}
\end{align}

\noi
where $p'$ denotes the H\"older conjugate of $p$, etc., 
and, moreover, when $\M = \R^d$, 
\begin{align}
\frac{1}{p}+\frac{1}{q} \ge \frac 1r .
\label{GKO0a}
\end{align}

\noi
Furthermore,  suppose in addition that one of the following conditions hold\textup{:}

\smallskip
\begin{itemize}
\item[(i)]
 $1 < p \le \infty$ and  $1 < q, r < \infty$,

\smallskip
\item[(ii)]
 $1 < p = r \le \infty$ and  $ q = \infty$,

\smallskip
\item[(iii)]
$1< p \le \infty$, $1 \le q < \infty$,  
and 
 $r = 1$
such that $q = p'$,

\smallskip
\item[(iv)]
$p= r = 1$ and $q = \infty$.

\end{itemize}

\noi
Then, we have
\begin{align}
\|\jb{\nb}^{-s}(fg)\|_{L^r(\M)}
\lesssim\| \jb{\nb}^{-s} f\|_{L^{p}(\M)} \| \jb{\nb}^{s} g\|_{L^{q}(\M)} .
\label{GKO1}
\end{align}

\end{proposition}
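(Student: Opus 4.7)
The plan is to establish Proposition \ref{PROP:2} by a standard duality argument that reduces the negative-regularity product estimate to the positive-regularity fractional Leibniz rule already available as Theorem \ref{THM:A} (on $\R^d$) and Proposition \ref{PROP:1} (on $\T^d$). First, by duality,
\[
\|\jb{\nb}^{-s}(fg)\|_{L^r(\M)}=\sup_{\|h\|_{L^{r'}(\M)}\le 1}\bigl|\langle fg,\jb{\nb}^{-s} h\rangle\bigr|,
\]
and shifting the fractional operators via formal self-adjointness,
\[
\langle fg,\jb{\nb}^{-s} h\rangle=\langle f,\, g\,\jb{\nb}^{-s} h\rangle=\langle \jb{\nb}^{-s} f,\, \jb{\nb}^s\bigl(g\,\jb{\nb}^{-s}h\bigr)\rangle,
\]
so that H\"older yields
\[
\bigl|\langle fg,\jb{\nb}^{-s} h\rangle\bigr|\le \|\jb{\nb}^{-s}f\|_{L^p(\M)}\,\|\jb{\nb}^s(g\,\jb{\nb}^{-s}h)\|_{L^{p'}(\M)}.
\]

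Next, I apply the fractional Leibniz rule \eqref{Leib2} or \eqref{Leib4} to the second factor with exponent splittings $\frac{1}{p'}=\frac{1}{q}+\frac{1}{\wt q}=\frac{1}{\wt p}+\frac{1}{r'}$, i.e.
\[
\tfrac{1}{\wt q}=\tfrac{1}{p'}-\tfrac{1}{q},\qquad \tfrac{1}{\wt p}=\tfrac{1}{p'}-\tfrac{1}{r'}.
\]
The auxiliary secondary conditions $q\ge p'$ and $r'\ge p'$ in \eqref{GKO0} are precisely what guarantees $\wt p,\wt q\in [1,\infty]$, giving
\[
\|\jb{\nb}^s(g\,\jb{\nb}^{-s}h)\|_{L^{p'}}\les \|\jb{\nb}^s g\|_{L^q}\,\|\jb{\nb}^{-s} h\|_{L^{\wt q}}+\|g\|_{L^{\wt p}}\,\|h\|_{L^{r'}}.
\]

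The final step is Sobolev embedding. A short calculation shows that the primary condition $\frac{1}{p}+\frac{1}{q}\le \frac{1}{r}+\frac{s}{d}$ in \eqref{GKO0} is equivalent to $\frac{1}{r'}-\frac{1}{\wt q}\le \frac{s}{d}$ and $\frac{1}{q}-\frac{1}{\wt p}\le \frac{s}{d}$, which give the desired bounds $\|\jb{\nb}^{-s} h\|_{L^{\wt q}}\les \|h\|_{L^{r'}}$ and $\|g\|_{L^{\wt p}}\les \|\jb{\nb}^s g\|_{L^q}$ via Sobolev embedding. On $\R^d$ these embeddings additionally require $\wt q\ge r'$ and $\wt p\le \infty$; a direct computation identifies both with the extra hypothesis \eqref{GKO0a}. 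On $\T^d$ no such supplementary hypothesis is needed because the trivial inclusion $L^{p_0}(\T^d)\hookrightarrow L^{p_1}(\T^d)$ for $p_1\le p_0$ bridges any mismatch between $\wt q$ and $r'$, or between $\wt p$ and $\infty$.

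The four sub-cases (i)--(iv) dictate which sub-case of Theorem \ref{THM:A}/Proposition \ref{PROP:1} must be invoked in Step 3: in case (i) all exponents are in $(1,\infty)$ so the non-endpoint Leibniz suffices; in case (ii) one has $\wt p=\infty$ and must use the $L^\infty$-endpoint together with the trivial bound $\|g\|_{L^\infty}\les \|\jb{\nb}^s g\|_{L^\infty}$ coming from the $L^1$-integrability of the Bessel kernel $G_s$ (which replaces the Sobolev embedding when $q=\infty$); in case (iii), and more dramatically in case (iv) where $p=r=1$ and $q=\infty$, both H\"older splittings produce $(L^1, L^\infty)$ pairings and one must invoke the $L^1$-endpoint Leibniz rule of Theorem \ref{THM:A}(iii)/Proposition \ref{PROP:1}(iii). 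The main obstacle will be the bookkeeping for these endpoint sub-cases: checking that the intermediate exponents $\wt p,\wt q$ always fall into the admissible range of both the relevant sub-case of Theorem \ref{THM:A}/Proposition \ref{PROP:1} and the Sobolev embedding on $\M$, without violating the quasi-Banach threshold or the integrability requirements of the Bessel kernel.
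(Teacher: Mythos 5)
Your proof follows essentially the same strategy as the paper's: duality against $L^{r'}$, H\"older in the $L^p$--$L^{p'}$ pairing, the fractional Leibniz rule applied to $\jb{\nb}^s\bigl(g\,\jb{\nb}^{-s}h\bigr)$ with intermediate exponents $\wt p,\wt q$, and closing with Sobolev (or Lebesgue nestedness on $\T^d$); up to the change of variable $h\mapsto \jb{\nb}^{-s}h$ your intermediate quantities match the paper's exactly. One cosmetic slip to fix: the condition you call ``$\wt p\le\infty$'' is equivalent to $r'\ge p'$, i.e.\ the second constraint in \eqref{GKO0} (needed so the exponent is legal at all), whereas the $\R^d$-Sobolev embedding $\|g\|_{L^{\wt p}}\lesssim\|\jb{\nb}^s g\|_{L^q}$ requires $\wt p\ge q$, and it is this inequality together with $\wt q\ge r'$ that reduces to \eqref{GKO0a}.
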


In
\cite[Lemma 3.4\,(ii)]{GKO}, 
Proposition \ref{PROP:2} was shown for $0 < s< 1$
and $1<p,q,r<\infty$, 
which we now extend to 
 some endpoint cases and to the range $s\ge 1$.
Proposition \ref{PROP:2} has played a fundamental role
in the recent study of singular stochastic dispersive PDEs; 
see, for example, 
\cite{GKO, OPTz,  GKOT, GKO2, OOT1, OOT2, OTWZ}.
We point out that for such an application, we normally take $p = \infty$
(or $p \gg 1$) and thus the second condition in \eqref{GKO0}
does not (essentially) impose any extra condition.
We present 
a
 proof of Proposition \ref{PROP:2} in Section \ref{SEC:A}.

\section{Preliminary lemmas}

In this section, we state several preliminary lemmas.

We first establish decay estimates for the kernels 
of $D^s = D^s_{\R^d}$ and $J^s = J^s_{\R^d}$ on $\R^d$.
Given  $s >0$, let $\K_s$ and $\GG_s$
be 
the kernels of $D^s$ and $J^s$ on $\R^d$, respectively, given by 
\begin{align}
D^s f = \K_s * f \qquad \text{and}\qquad 
J^s f = \GG_s * f.
\label{decay0}
\end{align}

\noi
The following lemma establishes a (well-known) decay
of these kernels away from the origin.
We present its proof for readers' convenience.

\begin{lemma}\label{LEM:decay}
Let $s > 0$.  Then, given $c_0 > 0$, 
there exists $C_1, C_2 > 0$ such that 
\begin{align}
|\K_s(x)| & \le C_1 |x|^{-d-s}, \label{decay1}\\
 |\GG_s(x)|  & \le C_2 |x|^{-d-s}\label{decay1x}
\end{align}

\noi
for any $x\in \R^d$ with $|x| \ge  c_0$, 
where $C_1$ is independent of $c_0$.

\end{lemma}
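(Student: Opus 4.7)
The plan is to handle both $\K_s$ and $\GG_s$ via a Littlewood--Paley decomposition of the Fourier multiplier, rescaling each dyadic piece to a fixed (or to a uniformly bounded family of) Schwartz function(s), and summing the dyadic contributions with the natural split at the frequency scale $\sim |x|^{-1}$. Note first that when $s \in 2\N$, the symbol $(2\pi|\xi|)^s$ is a polynomial in $\xi$, so $\K_s$ is a finite linear combination of derivatives of the Dirac delta at the origin and \eqref{decay1} holds vacuously for $|x|\ge c_0 > 0$; I henceforth focus on $s>0$ with $s\notin 2\N$ for the Riesz kernel.

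For $\K_s$, fix a radial $\widehat\psi \in C_c^\infty(\R^d)$ supported in $\{\tfrac12 \le |\xi| \le 2\}$ with $\sum_{j\in\Z}\widehat\psi(2^{-j}\xi)=1$ for $\xi\ne 0$, and set
\[
\K_s^{(j)}(x) := \F^{-1}\bigl[(2\pi|\xi|)^s\,\widehat\psi(2^{-j}\xi)\bigr](x).
\]
Homogeneity of $|\xi|^s$ together with the substitution $\xi=2^j\eta$ gives $\K_s^{(j)}(x)=2^{j(d+s)}\Psi(2^j x)$ for the fixed Schwartz function $\Psi:=\F^{-1}\bigl[(2\pi|\eta|)^s\widehat\psi(\eta)\bigr]$. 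For $x$ with $|x|\ge c_0$, choose $j_0\in\Z$ with $2^{j_0}\sim|x|^{-1}$. Summing over $j\le j_0$ using $|\Psi|\le\|\Psi\|_{L^\infty}$ and $d+s>0$ gives $\les 2^{j_0(d+s)}\les|x|^{-(d+s)}$, while summing over $j>j_0$ using the Schwartz decay $|\Psi(y)|\les\jb{y}^{-M}$ with any fixed $M>d+s$ gives $\les|x|^{-M}\,2^{j_0(d+s-M)}\les|x|^{-(d+s)}$. The implicit constants depend on neither $x$ nor $c_0$, which yields \eqref{decay1}.

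For $\GG_s$, split $\widetilde m(\xi):=(1+4\pi^2|\xi|^2)^{s/2}=\phi_0(\xi)\widetilde m(\xi)+(1-\phi_0(\xi))\widetilde m(\xi)$, where $\phi_0\in C_c^\infty(\R^d)$ with $\phi_0\equiv 1$ near the origin. The low-frequency piece $\phi_0\widetilde m$ lies in $C_c^\infty(\R^d)$, so its inverse Fourier transform is Schwartz and decays faster than any negative power of $|x|$ for $|x|\ge c_0$ (with a constant that may depend on $c_0$). For the high-frequency piece $(1-\phi_0)\widetilde m$, apply the dyadic decomposition above restricted to indices $j\ge j_1$ large enough that $\widehat\psi(2^{-j}\cdot)$ is supported in $\{|\xi|\ges 1\}$; rescaling now produces pieces $2^{j(d+s)}\Psi_j(2^j x)$ with $\Psi_j:=\F^{-1}\bigl[2^{-js}\widetilde m(2^j\cdot)\,\widehat\psi(\cdot)\bigr]$, and a routine check shows that $\{2^{-js}\widetilde m(2^j\cdot)\}_{j\ge j_1}$ is bounded in $C^\infty$ on $\mathrm{supp}(\widehat\psi)$, so $\{\Psi_j\}$ is a uniformly Schwartz family; the identical two-part summation then delivers \eqref{decay1x}. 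The one minor subtlety in either case is reconciling the dyadic identity distributionally with pointwise evaluation on $\R^d\setminus\{0\}$: for each such $x$ the dyadic series converges absolutely by the bounds above, and the resulting locally integrable function agrees with the true kernel modulo distributions supported at $\{0\}$, which are invisible for $|x|\ge c_0$.
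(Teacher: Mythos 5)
Your proof is correct, and it takes a genuinely different route from the paper's. The paper factors $D^s = (-\Delta)^{k+1} D^{-2(k+1)+s}$ and $J^s = (1-\Delta)^{k+1} J^{-2(k+1)+s}$ with $k = [s/2]$, and then differentiates the explicit Riesz and Bessel potential kernels (the latter via the subordination formula $G_\sigma(x) = C_{d,\sigma}\int_0^\infty e^{-t}e^{-|x|^2/(4t)}t^{(\sigma-d)/2}\,dt/t$); this gives even exponential decay for $\GG_s$. You instead run a Littlewood--Paley decomposition of the multiplier, rescale each annular piece to a fixed (or uniformly Schwartz) profile, and sum with the split at $2^{j_0} \sim |x|^{-1}$ --- a standard single-scale plus summation argument. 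Both are legitimate. What your route buys is robustness: it does not require the pointwise Riesz-potential formula $K_\sigma(x) = C_{d,\sigma}|x|^{-d+\sigma}$ (which as a pointwise identity is restricted to $0 < \sigma < d$ and would need amendment for small $d$ and $s$), nor explicit kernel formulas at all; it also makes the uniformity of $C_1$ in $c_0$ transparent (no scale is distinguished for the homogeneous multiplier), and correctly isolates the $c_0$-dependence of $C_2$ in the compactly-supported low-frequency block alone. Your carve-out of $s \in 2\N$ for $\K_s$ is harmless but in fact unnecessary: each annular piece $\widehat\psi(2^{-j}\xi)(2\pi|\xi|)^s$ is already $C_c^\infty$ regardless of the parity of $s$, so the same dyadic summation applies, merely delivering the bound $0 \le C|x|^{-d-s}$ in that degenerate case. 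The distributional caveat you record at the end is the right one, and the resolution you give --- absolute convergence of the kernel series away from the origin plus agreement in $\D'(\R^d\setminus\{0\})$ --- suffices for the stated pointwise bound on $\{|x|\ge c_0\}$.
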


\begin{proof}
We first consider the case $0 < s < 2$.
By writing
$D^s = (-\Dl) D^{-2+s}$, 
we have $\K_s = (-\Dl) K_{2-s}$, 
where $K_\s$, $\s > 0$, 
denotes 
 the kernel of the Riesz potential $D^{-\s}$ of order $\s > 0$,  given by 
\begin{align*}
K_\s(x) = C_{d, \s} |x|^{-d + \s}, \quad x \ne 0.
\end{align*}

\noi
By applying $-\Dl$ to $K_{2-s}$, we then have
\begin{align*}
|\K_s(x)| = C_{d, 2-s}\big|(-\Dl) |x|^{-d + 2-s}\big|
\les |x|^{-d-s}, 
\end{align*}

\noi
yielding \eqref{decay1}.

Similarly, by writing $J^s = (1-\Dl)J^{-2+s}$, 
we have 
\begin{align}
\GG_s = (1-\Dl) G_{2-s},
\label{decay1a}
\end{align}

\noi 
where $G_\s$, $\s > 0$, 
denotes the kernel of the Bessel potential $(1-\Dl)^{-\s}$
of order $\s > 0$, given by 
\begin{align}
G_\s(x) = C_{d, \s} \int_0^\infty e^{-t} e^{-\frac{|x|^2}{4t}}t^\frac{\s-d}{2}\frac {dt}{t}.
\label{decay2}
\end{align}

\noi
See \cite[p.\,14]{Gra2}.
Note that $G_\s(x)$ is smooth away from the origin $x = 0$.
For $|x| \ge c_0$, we have
\begin{align}
t + \frac{|x|^2}{4t} \ge t + \frac {c_0^2}{4t}\quad
\text{and}\quad
t + \frac{|x|^2}{4t} \ge |x|, 
\label{decay3}
\end{align}

\noi
where the latter follows from completing a square.
Thus, from \eqref{decay2} and \eqref{decay3}, we have
\begin{align}
|G_{2-s}(x)| \les 
e^{-\frac{|x|}{2}}
 \int_0^\infty e^{-\frac t2} e^{-\frac{c_0^2}{8t}}t^\frac{2-s-d}{2}\frac {dt}{t}
\les e^{-\frac{|x|}{2}}
\label{decay4}
\end{align}

\noi
for $|x| \ge c_0$.
Similarly, by applying $-\Dl$ to $G_{2-s}$ in \eqref{decay2} with \eqref{decay3}, we  have
\begin{align}
|(-\Dl) G_{2-s}(x)|
\les 
e^{-\frac{|x|}{2}}
 \int_0^\infty \Big( \frac 1t + \frac {|x|^2} {t^2}\Big)e^{-\frac t2} e^{-\frac{c_0^2}{8t}}t^\frac{2-s-d}{2}\frac {dt}{t}
\les e^{-\frac{|x|}{3}}
\label{decay5}
\end{align}

\noi
for $|x| \ge c_0$.
Hence, \eqref{decay1x} follows from \eqref{decay1a}, \eqref{decay4}, 
and \eqref{decay5}.

When $s \ge 2$, by writing 
\begin{align*}
D^s = (-\Dl)^{k+1} D^{-2(k+1)+s}
\quad \text{and}\quad  
J^s = (1-\Dl)^{k+1}J^{-2(k+1)+s} 
\end{align*}

\noi
with $k = \big[\frac s2\big]$, where $[\,\cdot \,]$ denotes the integer part, 
we can repeat computations analogous to those presented above
and obtain \eqref{decay1} and \eqref{decay1x}.
We omit the details.  
\end{proof}


Next, 
we  recall the Poisson summation formula
(\cite[Theorem~3.2.8]{Gra1}).

\begin{lemma}\label{LEM:Poisson}
Let $f\in C(\R^d)$.
Suppose that 
there exist $C, \dl > 0$ such that 
\[ |f(x)| \leq C(1+|x|)^{-d-\dl}\]

\noi
for any $ x \in \R^d$ 
and 
\[ \sum_{n \in \Z^d} |\ft f(n)| < \infty.\]

\noi
Then, 
for any $x\in \R^d$, we have
\begin{align*}
\sum_{n\in\Z^d} \ft{f}(n)e^{2\pi i n \cdot x} = \sum_{n\in \Z^d} f(x + n).
\end{align*}

\end{lemma}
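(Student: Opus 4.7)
The plan is to follow the standard route: build the periodization $F(x) = \sum_{n\in\Z^d} f(x+n)$, compute its Fourier coefficients, and then recover the identity from uniqueness of Fourier coefficients for continuous periodic functions.

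First, I would verify that $F$ is a well-defined continuous $\Z^d$-periodic function on $\R^d$. Periodicity is immediate from reindexing. For convergence, given a bounded set $K\subset \R^d$, the decay hypothesis $|f(x)| \le C(1+|x|)^{-d-\dl}$ gives $|f(x+n)| \les (1+|n|)^{-d-\dl}$ uniformly in $x\in K$ for $|n|$ large, and $\sum_{n\in \Z^d}(1+|n|)^{-d-\dl}<\infty$, so the series defining $F$ converges absolutely and uniformly on compact sets. Since each summand is continuous, $F\in C(\R^d)$.

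Next, I would compute the Fourier coefficients of $F$ on $\T^d=[0,1)^d$. For $m\in \Z^d$, using the uniform convergence of the defining sum on $[0,1]^d$ to swap sum and integral, and then the change of variables $y=x+n$ together with $e^{-2\pi i m\cdot n}=1$:
\begin{align*}
\int_{[0,1]^d} F(x) e^{-2\pi i m\cdot x}\,dx
& = \sum_{n\in\Z^d}\int_{[0,1]^d} f(x+n)e^{-2\pi i m\cdot x}\,dx\\
& = \sum_{n\in\Z^d}\int_{n+[0,1]^d} f(y)e^{-2\pi i m\cdot y}\,dy
 = \int_{\R^d}f(y)e^{-2\pi i m\cdot y}\,dy = \ft f(m).
\end{align*}
Here the final rearrangement of the sum of integrals into one integral over $\R^d$ is justified by absolute integrability of $f$, which follows from the pointwise decay.

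Now, by the hypothesis $\sum_{n\in\Z^d}|\ft f(n)|<\infty$, the series $G(x):=\sum_{n\in\Z^d}\ft f(n)e^{2\pi i n\cdot x}$ converges absolutely and uniformly on $\R^d$, defining a continuous $\Z^d$-periodic function whose Fourier coefficients coincide with $\ft f(m)$ by termwise integration. Thus $F$ and $G$ are two continuous periodic functions with identical Fourier coefficients; by the uniqueness of Fourier coefficients for continuous functions on $\T^d$ (a standard consequence of the Stone--Weierstrass theorem or of Fej\'er's theorem), we conclude $F\equiv G$, which is exactly the stated identity.

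The only delicate point is the interchange of summation and integration in the Fourier coefficient computation and the justification that $F\in C(\R^d)$; both are handled by the decay assumption $|f(x)|\le C(1+|x|)^{-d-\dl}$, which gives absolute and locally uniform convergence of the periodization. All other steps are essentially formal.
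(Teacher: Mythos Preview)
Your proof is correct and is exactly the standard argument; the paper does not supply its own proof of this lemma but simply cites \cite[Theorem~3.2.8]{Gra1}, whose proof is essentially the one you wrote.
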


Lastly, let us introduce the Littlewood-Paley projector $\Q_j$ and
a useful commutator estimate (Lemma \ref{LEM:MW}).
Let $\phi:\R \to [0, 1]$ be a smooth  bump function supported on $[-\frac{8}{5}, \frac{8}{5}]$ 
and $\phi\equiv 1$ on $\big[-\frac 54, \frac 54\big]$.
For $\xi \in \R^d$, we set $\Phi_0(\xi) = \phi(|\xi|)$
and 
\begin{align*}
\Phi_{j}(\xi) = \phi\big(\tfrac{|\xi|}{2^j}\big)-\phi\big(\tfrac{|\xi|}{2^{j-1}}\big)
\end{align*}

\noi
for $j \in \N$.
Then, for $j \in \Z_{\geq 0} := \N \cup\{0\}$, 
we define  the Littlewood-Paley projector  $\Q_j$ 
as the Fourier multiplier operator with multiplier $\Phi_j$;
\begin{align}
\F_{\T^d}(\Q_j f)(n) = \Phi_j(n) \F_{\T^d}(f)(n), 
\label{LP0}
\end{align}

\noi
where $\F_{\T^d}$ denotes the Fourier transform on $\T^d$.
Note that we have 
\begin{align*}
\sum_{j = 0}^\infty \Phi_j (\xi) = 1
\end{align*}

\noi
 for each $\xi \in \R^d$. 
Thus, 
we have 
\[ f = \sum_{j = 0}^\infty \Q_j f. \]

\noi
Now, given $s\ge 0$ and $j \in \Z_{\ge 0}$, we define the commutator 
$[2^{-js}D^s_{\T^d}\Q_j, f](g)$ by setting 
\begin{align}
[2^{-js}D^s_{\T^d}\Q_j, f](g) = 2^{-js}D^s_{\T^d}\Q_j(fg) -  f \cdot 2^{-js} D^s_{\T^d}\Q_j g.
\label{MW1}
\end{align}

\noi
Then, we have 
the following commutator estimate.

\begin{lemma}\label{LEM:MW}
Let $s \ge 0$ and $j \in \Z_{\ge 0}$.
Then, given $1 \le p, q, r \le \infty$ with $\frac 1r = \frac 1p + \frac 1q$, 
we have 
\begin{align*}
\big\| [2^{-js}D^s_{\T^d}\Q_j, f](g)\big\|_{L^r(\T^d)}
\les 2^{-j} \| \nb f \|_{L^p(\T^d)}
\|g \|_{L^q(\T^d)}, 
\end{align*}

\noi
uniformly in $j \in \Z_{\ge 0}$.

\end{lemma}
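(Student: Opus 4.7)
The plan is to realize the operator $T_j := 2^{-js} D^s_{\T^d} \Q_j$ as a convolution against its $\R^d$-kernel, derive an integral representation of the commutator $[T_j, f](g)$, and reduce the bound to a first-moment estimate of this kernel which scales like $2^{-j}$ by Littlewood--Paley rescaling.

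Set $m_j(\xi) := (2\pi |\xi|)^s 2^{-js} \Phi_j(\xi)$ and $K_j^{\R} := \F^{-1}_{\R^d}[m_j]$. A direct Fourier-series computation shows that for any smooth $\Z^d$-periodic function $h$ on $\R^d$ one has $T_j h(x) = \int_{\R^d} K_j^{\R}(x - y) h(y)\, dy$, whence
\[
[T_j, f](g)(x) = \int_{\R^d} K_j^{\R}(u) \bigl(f(x-u) - f(x)\bigr) g(x-u)\, du.
\]
Writing $f(x-u) - f(x) = -u \cdot \int_0^1 \nabla f(x - tu)\, dt$ via the fundamental theorem of calculus and then applying Minkowski's integral inequality, H\"older, and the translation invariance of $L^p(\T^d)$-norms yields
\[
\| [T_j, f](g) \|_{L^r(\T^d)} \lesssim \| \nabla f \|_{L^p(\T^d)} \| g \|_{L^q(\T^d)} \int_{\R^d} |u|\, |K_j^{\R}(u)|\, du.
\]

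For $j \ge 1$, writing $\Phi_j(\xi) = \Psi(\xi/2^j)$ with $\Psi(\eta) := \phi(|\eta|) - \phi(2|\eta|)$ supported on an annulus away from the origin, I see that $m_j(\xi) = \tilde m(\xi/2^j)$ with $\tilde m(\eta) = (2\pi|\eta|)^s \Psi(\eta)$ smooth and compactly supported; hence $K_j^{\R}(x) = 2^{jd} \tilde K(2^j x)$ for some Schwartz function $\tilde K = \F^{-1}[\tilde m]$. The substitution $v = 2^j u$ then gives
\[
\int_{\R^d} |u|\, |K_j^{\R}(u)|\, du = 2^{-j} \int_{\R^d} |v|\, |\tilde K(v)|\, dv \lesssim 2^{-j},
\]
which yields the desired bound for $j \ge 1$.

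The hard part is the low-frequency case $j = 0$ when $0 < s \le 1$: the multiplier $m_0(\xi) = (2\pi|\xi|)^s \phi(|\xi|)$ has a $|\xi|^s$-singularity at the origin, so by Lemma~\ref{LEM:decay} the kernel $K_0^{\R}$ decays at infinity only like $|u|^{-d-s}$, and the first-moment integral diverges. I would circumvent this by exploiting that $T_0$ annihilates constants when $s > 0$, since $(2\pi|n|)^s|_{n=0} = 0$; thus $[T_0, f](g) = [T_0, f - \bar f](g)$, where $\bar f$ denotes the mean of $f$. Moreover, $\Phi_0|_{\Z^d}$ is supported on finitely many lattice points, so $T_0$ is convolution on $\T^d$ against a trigonometric polynomial and therefore bounded on every $L^s(\T^d)$. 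Combining this boundedness with H\"older's inequality and the Poincar\'e--Wirtinger inequality $\|f - \bar f\|_{L^p(\T^d)} \lesssim \|\nabla f\|_{L^p(\T^d)}$ delivers the bound with $2^{-j} = 1$. (The degenerate case $s = 0$ is not problematic: $m_0 = \Phi_0$ is then Schwartz and the scaling argument above applies to $j = 0$ as well.)
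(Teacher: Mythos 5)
Your proof is correct and implements essentially the same strategy the paper points to, namely a periodic adaptation of the commutator estimate in \cite[Lemma~2.97]{BCD}: write $[T_j,f](g)$ as an integral against the convolution kernel of $T_j$, pull out a factor $|u|$ via the fundamental theorem of calculus, and close with a first-moment bound on the kernel that scales like $2^{-j}$. Every step in your Minkowski/H\"older/translation-invariance chain is valid for $1 \le r \le \infty$, your rescaling for $j\ge 1$ is exactly right, and your $j=0$ workaround is sound: $[T_0,f]=[T_0,f-\bar f]$ since constants commute with Fourier multipliers, $T_0$ is convolution against a trigonometric polynomial (hence $L^p$-bounded for all $p$ by Young), and Poincar\'e--Wirtinger on $\T^d$ holds for all $1\le p\le\infty$. (Two trivial slips: you write ``$L^s(\T^d)$'' where you mean $L^p$, and you invoke $T_0\mathbf 1=0$ when the relevant fact is the weaker one that constants commute with $T_0$; neither affects the argument. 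You also omit the intermediate case $j=0$, $s>1$, but there $|\K_s(u)|\lesssim|u|^{-d-s}$ has finite first moment and your main argument applies directly.) One remark on economy: the $j=0$, $0<s\le 1$ difficulty is self-inflicted by your choice to realize $T_j$ as convolution against its $\R^d$-kernel. If you instead convolve against the $\T^d$-kernel of $2^{-js}D^s_{\T^d}\Q_j$, then for $j=0$ this kernel is the trigonometric polynomial $\sum_n (2\pi|n|)^s\Phi_0(n)e^{2\pi in\cdot x}$ (finitely many terms, since $\Phi_0|_{\Z^d}$ has finite support), whose first moment over $\T^d$ is trivially finite, and no separate argument is needed; for $j\ge 1$ the $\T^d$-kernel's first moment inherits the $2^{-j}$ scaling from the $\R^d$-kernel via Poisson summation, using $|x|\lesssim_d|x+k|$ for $x\in[-\tfrac12,\tfrac12)^d$, $k\in\Z^d$. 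This is presumably the adaptation the paper has in mind.
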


By noting that the multiplier for $2^{-js}D^s_{\T^d} \Q_j$
is given by $\Phi_0(n)$ when $j = 0$ and 
by $\Psi(2^{-j}n)$, 
where $\Psi(n) = |n|^s \Phi_1(2 n)$, 
a straightforward adaptation of the proof of 
\cite[Lemma 2.97]{BCD} to the periodic setting 
yields Lemma \ref{LEM:MW}.
See also \cite[Lemma A.10]{MW}
for an analogous commutator estimate on $\T^d$.

\section{Proof of Proposition \ref{PROP:1}}
\label{SEC:2}

In this section, we present a proof of  Proposition \ref{PROP:1}.

\begin{proof}[Proof of Proposition \ref{PROP:1}]
We only present the proof of \eqref{Leib3} since
\eqref{Leib4} follows in a similar manner, using \eqref{decay1x}
in Lemma \ref{LEM:decay}.
Fix $s > 0$ and $p_j, q_j, r$, $j = 1, 2$  as in the statement
of Theorem \ref{THM:A}.

Let us first introduce some notations.
For functions on $\T^d$, 
let 
$\P_{\ne 0}$ denote the projector to the non-zero frequencies, 
defined by 
$\P_{\ne 0}f = f - \int_{\T^d} f dx$.
Then, set $\P_0 = \Id - \P_{\ne 0}$, 
namely, $\P_0f =  \F_{\T^d}(f) (0)= \int_{\T^d} f dx$, 
where $\F_{\T^d}$ denotes the Fourier transform on $\T^d$.

By noting $D^s_{\T^d} \P_0 f= 0$
and applying H\"older's inequality (with $\text{Vol}(\T^d) = 1$), we have 
\begin{align}
    \begin{split}
&\|D^s_{\T^d}(fg)\|_{L^r(\T^d)}\\  
 & \quad 
 \le \|D^s_{\T^d}( \P_{\ne 0} f \P_{\ne 0} g )\|_{L^r(\T^d)} 
 + \|\P_0 g \cdot D^s_{\T^d} \P_{\ne 0} f  \|_{L^r(\T^d)} \\
& \hphantom{XXX}
 + \|\P_0 f \cdot D^s_{\T^d}  \P_{\ne 0} g \|_{L^r(\T^d)} \\
 & \quad 
 \le
 \|D^s_{\T^d}( \P_{\ne 0} f \P_{\ne 0} g )\|_{L^r(\T^d)} 
+ \|D^s_{\T^d}  f \|_{L^{r}(\T^d)} \|g\|_{L^{1}(\T^d)} \\
& \hphantom{XXX}
+ \|f\|_{L^{1}(\T^d)} \|D^s_{\T^d}   g \|_{L^{r}(\T^d)}\\
 & \quad 
 \le
 \|D^s_{\T^d}( \P_{\ne 0} f \P_{\ne 0} g )\|_{L^r(\T^d)} 
+ \|D^s_{\T^d}  f \|_{L^{p_1}(\T^d)} \|g\|_{L^{q_1}(\T^d)} \\
& \hphantom{XXX}
+ \|f\|_{L^{p_2}(\T^d)} \|D^s_{\T^d}   g \|_{L^{q_2}(\T^d)}.
\end{split}
\label{A1}
\end{align}

\noi
Hence, it suffices to prove \eqref{Leib3}
for mean-zero functions on $\T^d$, 
which will allow us to control the first term on the right-hand side
of \eqref{A1}.

In the following, we fix mean-zero functions 
$f$ and $g$ on $\T^d$ and prove \eqref{Leib3}.
Let 
 $\varphi_1, \varphi_2 \in C^\infty(\R^d; [0,1])$ 
 be periodic, $\varphi_j (x + k) = \varphi_j(x)$, 
 $k \in \Z^d$, $j = 1, 2$
 such that  
\begin{align}
\varphi_1 (x)+\varphi_2(x)
 = 1 
\label{A2}
\end{align}
 for any $x \in \R^d$, 
 $\varphi_1$ is supported on $\bigcup_{m\in \Z^d}\big( \big[ \frac18, \frac{7}{8} \big]^d+m\big)$, 
 and 
$\varphi_2$ is supported on $\bigcup_{m\in \Z^d}\big(  \big[-\frac38,  \frac{3}{8} \big]^d+m\big) $.

Let $F$ be the periodic extension of $f$ on $\R^d$. 
We define functions $G_j$, $j = 1, 2$, 
 on $\R^d$
by setting 
\begin{align}
G_1=  \ind_{[0,1]^d} \cdot \varphi_1 \cdot g\qquad \text{and}
\qquad 
G_2= \ind_{[-\frac12,\frac12]^d}\cdot
 \varphi_2 \cdot g.
\label{A3}
\end{align}

\noi
Then, from \eqref{A2} and \eqref{A3}, we have 
\begin{align}
 \F_{\T^d} (fg)(n)
= 
   \sum_{j=1,2} \F_{\R^d} ( F  G_{j} )(n)
\label{A4}
\end{align}

\noi
for any $n \in \Z^d$, where $\F_{\R^d}$ denotes the Fourier transform on $\R^d$.
Then,  from \eqref{A4} and the Poisson summation formula (Lemma \ref{LEM:Poisson}), 
we have 
\begin{align}
\begin{split}
D^s_{\T^d}(fg)(x)  
&=   \sum_{ n\in \Z^d } (2 \pi |n| )^{s}\F_{\T^d}  (fg)(n)  e^{2\pi i n \cdot x}   \\
&  = \sum_{j=1,2} \sum_{n\in \Z^d} 
(2 \pi |n| )^{s}\F_{\R^d}( F G_j  )(n)    e^{2\pi i n \cdot x}  \\
&   =    \sum_{j=1,2}  \sum_{k\in \Z^d} D^s_{\R^d} (F G_j)(x+k)
\end{split}
\label{A5}
\end{align}

\noi
for any $x\in \T^d$.

\medskip

\noi
$\bullet$ 
{\bf Case 1:}
Let us first consider the case 
$\frac12 \leq r <1$.
Then, from $\l^{r}(\Z^d) \subset \l^1(\Z^d)$
and  a change of variables, we have 
\begin{align}
    \| D^s_{\T^d} (f g)\|_{L^r(\T^d)} \lesssim \sum_{j=1,2} \| D^s_{\R^d} (F G_j)(x)\|_{L^r(\R^d)}.
\label{A6}
\end{align}

Let us introduce two more smooth cutoff functions\footnote{The space $C^\infty(\R^d; B)$
denotes the space of infinitely differentiable functions on $\R^d$ with values in $B$.}
$\psi_1, \psi_2 \in C^\infty(\R^d; [0,1])$ 
such that 
$\psi_{j}$ is supported on  $ \big [\frac 1{10}-\frac{j-1}2,\frac 9{10}-\frac{j-1}2\big]^d$
and 
$\psi_{j}\equiv1$ on  $\big[\frac 19 -\frac{j-1}2,\frac 89-\frac{j-1}2\big]^d$, $j = 1, 2$.
Then, 
by noting $ \psi_j \varphi_j = \varphi_j$, 
it follows
from \eqref{A3} that 
\begin{align}
FG_j= (\psi_j F)G_j .
\label{A7}
\end{align}

\noi
Hence, from \eqref{A6}, \eqref{A7}, and 
the fractional Leibniz rule on $\R^d$ (\eqref{Leib1} in Theorem~\ref{THM:A}), we have
\begin{align}
\begin{split}
\| D^s_{\T^d} (f g)\|_{L^r(\T^d)} 
& \les  \sum_{j=1,2}   \| D^s_{\R^d} (\psi_jF ) \|_{ L^{p_1} (\R^d)} \|G_j\|_{ L^{q_1} (\R^d)}\\
& \quad 
+ \sum_{j=1,2} \|  \psi_jF \|_{ L^{p_2}(\R^d) } \| D^s_{\R^d} G_j\|_{ L^{q_2}(\R^d) }.
\end{split}
\label{A8}
\end{align}

From \eqref{A3}
and the definitions of $\varphi_j$ and $\psi_j$, 
we have 
\begin{align}
 \|G_j\|_{ L^{q_1} (\R^d)}
 \le  \|g\|_{ L^{q_1} (\T^d)}
 \quad \text{and}
 \quad 
\|  \psi_jF  \|_{ L^{p_2}(\R^d) }
 \le
\| f \|_{ L^{p_2}(\T^d) }.
\label{A8a}
\end{align}

\noi
In view of \eqref{A8} and \eqref{A8a}, 
 in order to conclude the proof of \eqref{Leib3}
for mean-zero functions $f$ and $g$ on $\T^d$, 
it suffices to prove 
\begin{align} 
    \begin{split}
\|D^s_{\R^d} ( \psi_jF )\|_{L^p(\R^d)} 
& \les  \|D^s_{\T^d}  f\|_{L^p(\T^d)},\\
 \|D^s_{\R^d}  G_j \|_{L^p(\R^d)} 
 & \les \|D^s_{\T^d}  g\|_{L^p(\T^d)}
\end{split}
\label{A9}
\end{align}

\noi
for any $s > 0$, $1 \leq p \leq \infty  $, and $j=1,2$.

In the following, 
we only 
prove  the first bound in \eqref{A9}, 
since, in view of \eqref{A3},  the second bound follows from a similar computation.
Moreover, we only consider the $j = 1$ case.

Let $\K_s$ be the kernel of 
$D^s_{\R^d}$ as in \eqref{decay0}.
Then, 
with $\supp(\psi_1)\subset  \big [\frac 1{10},\frac 9{10}\big]^d$, 
we have
\begin{align}
\begin{split}
D^s_{\R^d} ( \psi_1F)(x)
& = \int_{[\frac 1{10}, \frac 9{10}]^d} \K_s(x-y) (\psi_1 F)(y) dy\\
& = \ind_{[0, 1)^d}(x) \cdot \int_{[\frac 1{10}, \frac 9{10}]^d} \K_s(x-y) (\psi_1 F)(y) dy\\
& \quad+ \ind_{\R^d \setminus [0, 1)^d}(x) \cdot  \int_{[\frac 1{10}, \frac 9{10}]^d} \K_s(x-y) (\psi_1 F)(y) dy\\
& =: \1(x) + \II(x).
\end{split}
\label{A10}
\end{align}

\noi
As for $\II$, we have $|x - y|\ges 1$
and thus 
by Young's inequality
and Lemma \ref{LEM:decay}
with $s > 0$, 
we have 
\begin{align}
\|\II  \|_{L^{p}(\R^d)} 
& \les
\| \ind_{\{|\,\cdot\,|\ges 1\}}\K_s\|_{L^1(\R^d)}
\|\psi_1 F\|_{L^p(\R^d)}
\le \|f\|_{L^p(\T^d)}.
\label{A11}
\end{align}

\noi
When $1 < p < \infty$, 
since we assume that $f$ is a mean-zero function on $\T^d$, we have
\begin{align}
\|f\|_{L^p(\T^d)}
\le \| J^s_{\T^d}f\|_{L^p(\T^d)}
 \sim 
\| D^s_{\T^d}f\|_{L^p(\T^d)}, 
\label{A12}
\end{align}

\noi
where  the second step follows from the Mihlin multiplier theorem (on $\T^d$)\footnote{Let $1 < p < \infty$.
Then, by
the Mihlin multiplier theorem on $\R^d$ (see Theorem 6.2.7
and (6.2.14) in  \cite{Gra1}), 
we have 
\begin{align}
 \| J^s_{\R^d}h \|_{L^p(\R^d)}
 \sim 
\| D^s_{\R^d}h\|_{L^p(\R^d)}
\label{XX1}
\end{align}

\noi
for any function $h$ on $\R^d$
whose Fourier support is contained in $\{|\xi|> \frac 12\}$.
Now,  let $\eta \in C^\infty(\R^d; [0,1])$ 
such that $\eta(\xi) \equiv 0$ for $|\xi|\le \frac 12$
and $\eta(\xi) \equiv 1$ for $|\xi|\ge 1$, 
and 
 define $b_1$ and $b_2$ by 
\begin{align*}
b_1(\xi) = \eta(\xi) \frac{ (2\pi |\xi|)^s}{(1+ 4\pi^2|\xi|^2)^\frac{s}{2}}
\quad \text{and}
\quad 
b_2(\xi) = \eta(\xi) \frac{(1+ 4\pi^2|\xi|^2)^\frac{s}{2}}{ (2\pi |\xi|)^s}.
\end{align*}

\noi
Then, both $b_1$ and $b_2$ are continuous and thus are regulated everywhere
(in the sense of Definition 4.3.6 in~\cite{Gra1}).
Hence, the second step in
\eqref{A12} for mean-zero functions on $\T^d$
follows from \eqref{XX1} and the transference
(Theorem 4.3.7 in \cite{Gra1}).
}
or the Littlewood-Paley characterization of
the non-homogeneous and homogeneous Sobolev spaces, 
together with the fact that $f$ has mean zero.
When $p = \infty$, 
by choosing $1 < q < \infty$ with $sq > d$, 
it follows from the Sobolev embedding theorem
(see Subsection 2.4 in \cite{BO}), 
the Mihlin multiplier theorem (as in \eqref{A12}),  
and H\"older's inequality (with $\text{Vol}(\T^d) = 1$) that 
\begin{align}
\|f\|_{L^\infty(\T^d)}
\les \| J^s_{\T^d}f\|_{L^q(\T^d)}
 \sim 
\| D^s_{\T^d}f\|_{L^q(\T^d)}
\leq \| D^s_{\T^d}f\|_{L^\infty(\T^d)}.
\label{A13}
\end{align}

\noi
When $p = 1$, by duality, H\"older's inequality, 
and \eqref{A13} (applied to $D^{-s}_{\T^d}g$), we have
\begin{align}
\begin{split}
\|f\|_{L^1(\T^d)}
& = \sup_{\substack{\|g\|_{L^\infty(\T^d)} = 1\\g, \, \text{mean zero}}}\bigg|\int_{\T^d} fg dx\bigg|\\
& = \sup_{\substack{\|g\|_{L^\infty(\T^d)} = 1\\g, \, \text{mean zero}}}\bigg|\int_{\T^d} 
(D^s_{\T^d}f) ( D^{-s}_{\T^d}g ) dx\bigg|\\
& \le
\| D^s_{\T^d}f\|_{L^1(\T^d)}
\cdot \sup_{\substack{\|g\|_{L^\infty(\T^d)} = 1\\g, \, \text{mean zero}}}
\| D^{-s}_{\T^d}g\|_{L^\infty(\T^d)}\\
& \les
\| D^s_{\T^d}f\|_{L^1(\T^d)}
\cdot 
\sup_{\substack{\|g\|_{L^\infty(\T^d)} = 1\\g, \, \text{mean zero}}}
\| g\|_{L^\infty(\T^d)}\\
& \les \| D^s_{\T^d}f\|_{L^1(\T^d)}, 
\end{split}
\label{A14}
\end{align}

\noi
where, in the first step, we used the fact that
\begin{align*}
\int_{\T^d} f g dx = \P_0 g \int_{\T^d} f dx + 
\int_{\T^d} f \P_{\ne 0} g dx 
= \int_{\T^d} f \P_{\ne 0} g dx, 
\end{align*}

\noi
since $f$ has mean zero on $\T^d$.
Putting \eqref{A11}, \eqref{A12}, \eqref{A13}, and \eqref{A14} together, 
we obtain
\begin{align}
\|\II  \|_{L^{p}(\R^d)} \les \| D^s_{\T^d}f\|_{L^p(\T^d)}.
\label{A15}
\end{align}

Next, we treat $\1$ in \eqref{A10}.
By noting 
$\F_{\T^d} (\psi_1 f ) (n) = \F_{\R^d} (\psi_1 F)(n)$
and proceeding as in \eqref{A5}
with the definition of $\1$ in \eqref{A10}, we have
\begin{align}
\begin{split}
& \ind_{[0, 1)^d}(x) \cdot D^s_{\T^d}(\psi_1 f)(x)  
   =  
\1(x) +  \ind_{[0, 1)^d}(x) \sum_{k\in \Z^d\setminus\{0\}}    D^s_{\R^d} (\psi_1 F )(x+k)\\
& \quad = \1(x) + \sum_{k\in \Z^d\setminus\{0\}}
\ind_{[0, 1)^d}(x) \cdot  \int_{[\frac 1{10}, \frac 9{10}]^d} \K_s(x+k-y) (\psi_1 F)(y) dy\\
& \quad =: \1(x) + \III(x)
\end{split}
\label{B1}
\end{align}

\noi
for any $x\in \R^d$, 
where we used the identification $\T^d \cong [0, 1)^d$.

We now estimate the $L^p(\T^d)$-norm of 
$D^{s}_{\T^d}(\psi_1 f)$.
By applying dyadic decompositions, we have 
\begin{align}
\begin{split}
\psi_1 f 
& = \sum_{j =0 }^\infty\Q_j(\psi_1 f) \\
& = \sum_{j , j_2 =0 }^\infty\Q_j(\Sb_{j_2} \psi_1 \cdot \Q_{j_2} f)
+ \sum_{j , j_2 =0 }^\infty\Q_j(\Sb_{j_2}^\perp \psi_1 \cdot \Q_{j_2} f)\\
& =: A_1 + A_2, 
\end{split}
\label{LP1}
\end{align}

\noi
where 
\begin{align}
\Sb_{j_2} (\psi_1 ) = \sum_{j_1 = 0}^{j_2- 3} \Q_{j_1} \psi_1
\qquad
\text{and}\qquad 
\Sb_{j_2}^\perp (\psi_1 ) = \sum_{j_1 =j_2- 2}^\infty \Q_{j_1} \psi_1.
\label{LP1a}
\end{align}

\noi
Let us first estimate the first term $A_1$ in \eqref{LP1}.
By noting that we have non-trivial contribution 
for $A_1$ only when $|j - j_2| \le 5$
and that 
$\Q_{j_2}\sum_{j = j_2 - 5}^{j_2+5} \Q_j = \Q_{j_2}$, 
we have 
\begin{align}
\begin{split}
D^s_{\T^d}A_1
& = \sum_{\substack{j, j_2 = 0\\|j - j_2 |\le 5}}^\infty 2^{js} [2^{-js} D^s_{\T^d} \Q_j,
\, \Sb_{j_2} \psi_1]( \Q_{j_2} f)
+ 
\sum_{j_2 = 0}^\infty
 \Sb_{j_2} \psi_1 \cdot D^s_{\T^d} \Q_{j_2} f\\
&  =: A_{11} + A_{12}, 
\end{split}
\label{LP2}
\end{align}

\noi
where 
$[2^{-js}D^s_{\T^d} \Q_j,  f](g)$ is 
as in \eqref{MW1}.
By Minkowski's inequality, 
Lemma \ref{LEM:MW}, and 
Bernstein's inequality (\cite[p.\,333]{TAO}),\footnote{In \cite{TAO}, 
Bernstein's inequalities are  stated on $\R^d$ but the same estimates
hold on $\T^d$.}
we have 
\begin{align}
\begin{split}
\|A_{11}\|_{L^p(\T^d)}
& \les
\sum_{j_2 = 0}^\infty
 2^{j_2s} 
2^{-j_2} \|\nb \Sb_{j_2} \psi_1\|_{L^\infty(\T^d)}\| \Q_{j_2} f\|_{L^p(\T^d)}\\
& \les
 \|\nb  \psi_1\|_{L^\infty(\T^d)}\|D^s_{\T^d} \Q_{j_2} f\|_{L^p(\T^d)}\\
& \le C(\psi_1)
\|D^s_{\T^d}  f\|_{L^p(\T^d)}.
\end{split}
\label{LP3}
\end{align}

\noi
As for the second term $A_{12}$ on the right-hand side of \eqref{LP2}, 
we apply
a ``low frequency to high frequency switch'' as in \cite[(3.1)]{BL}
and write 
\begin{align*}
A_{12}
& = 
 \psi_1  D^s_{\T^d}  f
 + \sum_{j_2 = 0}^\infty
 \Sb_{j_2}^\perp \psi_1 \cdot D^s_{\T^d} \Q_{j_2} f, 
\end{align*}

\noi
where $\Sb_{j_2}^\perp$ is as in \eqref{LP1a}.
Then, 
by H\"older's inequality, Minkowski's inequality,  and 
Bernstein's inequality,
we have 
\begin{align}
\begin{split}
\|A_{12}\|_{L^p(\T^d)}
& \les
 \| \psi_1\|_{L^\infty(\T^d)}\|D^s_{\T^d}  f\|_{L^p(\T^d)}\\
& \quad 
 + \sum_{j_2 = 0}^\infty2^{- j_2}
  \|D  \psi_1\|_{L^\infty(\T^d)}\|D^s_{\T^d}  f\|_{L^p(\T^d)}\\
& \le C(\psi_1)
\|D^s_{\T^d}  f\|_{L^p(\T^d)}.
\end{split}
\label{LP4}
\end{align}

Next, we estimate $A_2$.
From \eqref{LP1} and \eqref{LP1a}, we have 
\begin{align*}
A_2 
& = \sum_{j, j_1 = 0}^\infty 
\sum_{j_2 = 0}^{j_1 + 2}
\Q_j(\Q_{j_1}\psi_1 \cdot \Q_{j_2} f)
= \sum_{j, j_1 = 0}^\infty \Q_j(\Q_{j_1}\psi_1 \cdot \Sb_{j_1+5} f).
\end{align*}

\noi
Note that 
 we have non-trivial contribution 
for $A_2$ only when $j \le j_1 + 10$.
Then, by proceeding as above
and summing over  $j \le j_1 + 10$, 
a crude estimate yields
\begin{align}
\begin{split}
\|D^s_{\T^d}A_{2}\|_{L^p(\T^d)}
& \les
\sum_{\substack{j, j_1 = 0\\j \le j_1 + 10}}^\infty
2^{js} 
 \| \Q_{j_1}\psi_1 \|_{L^\infty(\T^d)}\|  f\|_{L^p(\T^d)}\\
& \les
\sum_{j_1 = 0}^\infty
(j_1+10)
2^{j_1s} 
 \| \Q_{j_1}\psi_1 \|_{L^\infty(\T^d)}\|  f\|_{L^p(\T^d)}\\
& \les
 \| \jb{\nb}^{s+1}\psi_1 \|_{L^\infty(\T^d)}\|  f\|_{L^p(\T^d)}\\
& \le C(\psi_1)
\|D^s_{\T^d}  f\|_{L^p(\T^d)}, 
\end{split}
\label{LP5}
\end{align}

\noi
where, in the last step, we used \eqref{A12}, \eqref{A13}, and \eqref{A14}. 
Hence, putting 
\eqref{LP1}, 
\eqref{LP2}, 
\eqref{LP3}, 
\eqref{LP4}, 
and 
\eqref{LP5} together, we obtain
\begin{align}
\|D^s_{\T^d}(\psi_1 f)\|_{L^p(\T^d)}
\les 
 \|D^s_{\T^d}f\|_{L^p(\T^d)}
\label{B1f}
\end{align}

\noi
for any $s \ge 0$
and $1\le p \le \infty$.

For any $x \in [0, 1)^d$, $y \in [\frac 1{10}, \frac 9{10}]^d$,  and $k\in \Z^d\setminus\{0\}$, 
we have 
\begin{align}
|x+k-y| \ges \max(1, |k|) .
\label{B2}
\end{align}

\noi
Then, 
from \eqref{B1}, \eqref{B1f}, Minkowski's inequality,  and  Lemma \ref{LEM:decay} with \eqref{B2},  we have
\begin{align}
\begin{split}
\|\1  \|_{L^p(\R^d)}
& \les  \|D^s_{\T^d}  f\|_{L^p(\T^d)}
+ \| \III\|_{L^p([0, 1)^d)}\\
& \les  \|D^s_{\T^d}  f\|_{L^p(\T^d)}
+ 
\bigg(\sum_{k\in \Z^d\setminus\{0\}}  |k|^{-d-s}\bigg)
\|f\|_{L^p(\T^d)}\\
& \les  \|D^s_{\T^d}  f\|_{L^p(\T^d)}, 
\end{split}
\label{B3}
\end{align}

\noi
where the last step follows from \eqref{A12}, \eqref{A13}, 
and 
\eqref{A14}.

Hence, from 
\eqref{A10}, 
\eqref{A15}, 
and 
\eqref{B3}, 
we conclude the first bound in \eqref{A9}.
Therefore, we obtain \eqref{Leib3}
for mean-zero functions $f$ and $g$ on $\T^d$ in this case.

\medskip

\noi
$\bullet$ 
{\bf Case 2:} 
Next, we consider the case $1 \le r \le \infty$.
In this case, the bound \eqref{A6} no longer holds.
From \eqref{A5}, we have 
\begin{align}
\begin{split}
\| D^s_{\T^d} (f g)\|_{L^r(\T^d)} 
& \le  \sum_{j=1,2} 
\bigg\|  \sum_{k\in \Z^d} D^s_{\R^d} (F G_j)(x+k)\bigg\|_{L^r([0, 1)^d)}\\
& \le \sum_{j=1,2} 
\|   D^s_{\R^d} (F G_j)(x)\|_{L^r(\R^d)}\\
& \quad + \sum_{j=1,2} 
\bigg\|  \sum_{k\in \Z^d\setminus\{0\}} D^s_{\R^d} (F G_j)(x+k)\bigg\|_{L^r([0, 1)^d)}\\
& = :  A_1 + A_2.
\end{split}
\label{C1}
\end{align}

\noi
As for $A_1$, by proceeding as in Case 1 
with \eqref{A7}, \eqref{A8a}, and \eqref{A9},
we obtain
\begin{align}
A_1 \les 
\| D^s f\|_{L^{p_1}(\T^d)} \|g\|_{L^{q_1}(\T^d)}+ \|f\|_{L^{p_2}(\T^d)} 
\|  D^s g\|_{L^{q_2}(\T^d)}.
\label{C2}
\end{align}

\noi
As for $A_2$, by applying  
Lemma \ref{LEM:decay} with \eqref{B2}, 
and Young's inequality  as in \eqref{B3} (in handling $\III$)
and then applying H\"older's inequality, 
we have
\begin{align}
\begin{split}
A_2 
& \les \| FG_j\|_{L^r([0, 1)^d)}
\le \| fg \|_{L^r(\T^d)}
\le \| f\|_{L^{p_1} (\T^d)} \| g \|_{L^{q_1}(\T^d)}\\
& \les \| D^s_{\T^d} f\|_{L^{p_1} (\T^d)} \| g \|_{L^{q_1}(\T^d)}, 
\end{split}
\label{C3}
\end{align}

\noi
where the last step follows from \eqref{A12}, \eqref{A13}, 
and 
\eqref{A14}.
Hence, from \eqref{C1}, \eqref{C2}, and~\eqref{C3}, we obtain \eqref{Leib3}
for mean-zero functions $f$ and $g$ on $\T^d$ in this case.
This completes the proof
of Proposition~\ref{PROP:1}.
\end{proof}

\section{Proof of Proposition \ref{PROP:2}}
\label{SEC:A}

We conclude this paper by presenting a  proof of  Proposition \ref{PROP:2}.

\begin{proof}[Proof of Proposition \ref{PROP:2}]

The desired bound  \eqref{GKO1}
follows from 
the proof of \cite[Lemma 3.4\,(ii)]{GKO}
together with 
the fractional Leibniz rule (Theorem \ref{THM:A} on $\R^d$
and Proposition \ref{PROP:1} on $\T^d$).
We present details for readers' convenience.

Let $s > 0$.
By duality, 
the fractional Leibniz rule (Theorem \ref{THM:A}
and Proposition \ref{PROP:1}), and Sobolev's inequality
(see also below for endpoint cases),
we have 
\begin{align*}
  \big\| \jb{\nb}^{-s} (fg) \big\|_{L^r} 
  & \le  \sup_{\|\jb{\nb}^s h \|_{L^{r'}} =1}  \bigg|\int fgh \, dx \bigg|\\
 & \le \big\| \jb{\nb}^{-s} f \big\|_{L^p}   \sup_{\| \jb{\nb}^s h \|_{L^{r'}} =1} 
 \big\| \jb{\nb}^{s} (gh) \big\|_{L^{p'}} \\
 &  \les \big\| \jb{\nb}^{-s} f \big\|_{L^p}  
 \sup_{\| \jb{\nb}^s h \|_{L^{r'}} = 1} \Big( \| g \|_{L^{\wt  q}} \big\| \jb{\nb}^s h \big\|_{L^{r'}} 
+ \big\|\jb{\nb}^s g \big\|_{L^q} \| h \|_{L^{\wt r'} } \Big) \\
 &  \lesssim   \big\| \jb{\nb}^{-s} f \big\|_{L^p} \big\| \jb{\nb}^s g \big\|_{L^q}   , 
\end{align*}

\noi
where the exponents satisfy the H\"older relations: 
\begin{align}
\frac{1}{p'}  =  \frac1q + \frac1{\wt r'} = \frac1{\wt q} + \frac1{r'}
\label{bilinear3}
\end{align}

\noi
and 
the exponents satisfy the Sobolev relations: 
\begin{align}
\frac{s}{d}   \ge   \frac1q  - \frac1{\wt q}
  \qquad \text{and}\qquad
\frac{s}d \ge  \frac1{r'} -   \frac1{\wt r'}.
\label{bilinear4}
\end{align}

\noi
When $\M = \R^d$, we need $q \le \wt q$ and $r' \le \wt r'$, 
which is guaranteed by \eqref{GKO0a} and \eqref{bilinear3}.
On $\T^d$, we do not need such conditions thanks to the nestedness
of the Lebesgue spaces.

Let us now examine possible ranges of $p$, $q$, and $r$ more closely.
\begin{itemize}
\item[(i)]
When $1 < p \le \infty$, namely $1\le p' < \infty$, 
there is no issue for $1 < q, r < \infty$, 
provided that $q, r' \ge p'$ coming from \eqref{bilinear3}.
Furthermore,

\begin{itemize}
\smallskip
\item[(i.a)] 
When $q = \infty$, 
the application of Sobolev's inequality 
\begin{align}
\| g \|_{L^{\wt  q}}
\les \big\|\jb{\nb}^s g \big\|_{L^q}
\label{Sob1}
\end{align}

\noi
is prohibited.  However, from the nestedness
of Sobolev spaces (recall $s > 0$),\footnote{In order to show the embedding 
$W^{s_1, \infty} \subset W^{s_2, \infty}$
for $s_1 > s_2$, we can not use the Mihlin multiplier theorem.
Instead,  this embedding 
follows from 
\[ W^{s_1, \infty} \subset B^{s_1}_{\infty, \infty} \subset B^{s_2}_{\infty, 1} \subset W^{s_2, \infty}, \]

\noi
where $B^s_{p, q}$ denotes the Besov spaces.} \eqref{Sob1} holds for $\wt q =  q = \infty$,
 which yields $p' = \wt r' = r'$ in view of \eqref{bilinear3}, 
 allowing the endpoint case $r = \infty$.

\smallskip
\item[(i.b)] 
When $q = 1$, we must have $p' = 1$, namely $p = \infty$.
Due to the failure of Sobolev's inequality at the endpoint, 
we must have $\wt q = q = 1$ such that \eqref{Sob1} holds by the nestedness of Sobolev spaces
(and the endpoint fractional Leibniz rule is applicable).
This forces $r' = \wt r' = \infty$, namely $r = 1$.

\smallskip
\item[(i.c)] 
When $r' = \infty$ (namely $r = 1$), 
we must have $\wt r' = r' = \infty$ (for the reason explained in (i.a) when $q= \infty$).
In this case, in view of \eqref{bilinear3}, we must have $p' = q = \wt q$
 allowing the endpoint case $q = 1$.
Note that this case contains Case (i.b) as a subcase.

\smallskip
\item[(i.d)] 
When $r' = 1$ (namely $r = \infty$), 
we must have $\wt r' = r' = 1$, which implies $p ' = 1$
and $q = \wt q = \infty$.
Note that this case is contained in Case (i.a) as a subcase.

\end{itemize}

\smallskip
\item[(ii)] 
When $p = 1$, namely $p' = \infty$, 
we must have $q = \wt q = r' = \wt r' = \infty$, in particular $r = 1$.

\end{itemize}

\noi
Once $p, q, r$ satisfy one of the conditions above, in view of \eqref{GKO0}, we see that there exist 
$\wt q$ and $\wt r'$ satisfying \eqref{bilinear3} and \eqref{bilinear4}.
\end{proof}

\begin{ackno}\rm
 The authors would like to thank an anonymous referee for the helpful comments which have improved the presentation of the paper.
\'A.B.  acknowledges the support from
 an  AMS-Simons Research Enhancement Grant for PUI Faculty.
T.O.~was supported by the European Research Council (grant no.~864138 ``SingStochDispDyn"). 
T.Z.~was supported by the National Natural Science Foundation of China
 (Grant No. 12101040, 12271051, and 12371239) and by a grant from the
China Scholarship Council (CSC). T.Z.~would like to thank the University
of Edinburgh for its hospitality where this manuscript was prepared.

\end{ackno}

\end{document}